\newtheorem{thrm}{Theorem}[section]
\newtheorem{lem}[thrm]{Lemma}
\newtheorem{cor}[thrm]{Corollary}
\newtheorem{definition}[thrm]{Definition}
\newtheorem{remark}[thrm]{Remark}
\newtheorem{ex}[thrm]{Example}
\numberwithin{equation}{section}
\author{C.A. Morales}
\address{Instituto de Matem\'atica\\
Universidade Federal do Rio de Janeiro\\
P. O. Box 68530 21945-970\\
Rio de Janeiro\\
Brazil.}
\email{morales@impa.br}
\keywords{Shadowable point, Homeomorphism, Compact metric space}
\subjclass[2010]{Primary  54H20, Secondary 49J53}
\begin{document}

\title[Shadowable points]{Shadowable points}

\begin{abstract}
We define shadowable points for homeomorphism on metric spaces.
In the compact case
we will prove the following results: The set of shadowable points is invariant, possibly nonempty or noncompact.
A homeomorphism has the pseudo-orbit tracing property if and only if every
point is shadowable. The chain recurrent and nonwandering sets coincides
when every chain recurrent point is shadowable. 
Minimal or distal homeomorphisms of compact connected metric spaces have no shadowable points.
The space is totally disconnected at every shadowable point for distal homeomorphisms (and conversely for equicontinuous homeomorphisms).
A distal homeomorphism has the pseudo-orbit tracing property if and only if
the space is totally disconnected (this improves Theorem 4 in \cite{mo}).
\end{abstract}

\maketitle

\section{Introduction}
\noindent
Let $f: X\to X$ be a homeomorphism of a metric space $X$.
If $\delta>0$ we say that a bi-infinite sequence $\xi=(\xi_n)_{n\in \mathbb{Z}}$ of $X$
is a {\em $\delta$-pseudo-orbit} if $d(f(\xi_n),\xi_{n+1})\leq\delta$ for all $n\in\mathbb{Z}$.
We say that  $\xi$ {\em can be $\delta$-shadowed}
if there is $x\in X$ such that $d(f^n(x),\xi_n)\leq \delta$ for all $n\in\mathbb{Z}$. 
We say that $f$ has the {\em pseudo-orbit tracing property} (abbrev. {\em POTP}) if
for every $\epsilon>0$ there is $\delta>0$ such that
every $\delta$-pseudo-orbit can be $\epsilon$-shadowed.
Homeomorphisms with the POTP have been widely studied \cite{ah}, \cite{p0}.

In this paper we will study the following concept closely related to that of
absolutely nonshadowable points \cite{yy}. It splits
the POTP into individual shadowings.

\begin{definition}
A point $x\in X$ is {\em shadowable} if for every $\epsilon>0$ there is $\delta>0$
such that every $\delta$-pseudo-orbit $\xi$ {\em with} $\xi_0=x$
can be $\epsilon$-shadowed.
\end{definition}

Here are some examples, where $Sh(f)$ denote the set of shadowable points of $f$.

\begin{ex}
\label{ex1}
Clearly if $f$ has the POTP, then $Sh(f)=X$ (i.e. every point is shadowable).
The converse is true on compact metric spaces by the next theorem.
As we shall see, the identity of the circle has no shadowable points.
Examples where $Sh(f)$ is a proper nonempty set will be given later on.
\end{ex}

We give some properties of $Sh(f)$ through the following standard definitions.
We say that a point $x\in X$ is
{\em nonwandering} if for every neighborhood $U$ of $x$ there is $k\in\mathbb{N}^+$ such that
$f^n(U)\cap U\neq\emptyset$. We say that $x$ is
{\em chain recurrent} if for every $\rho>0$ there is a {\em $\rho$-chain} from
$x$ to itself, i.e., a finite sequence
$\{x_i: 0\leq i\leq n\}$ satisfying $x_0=x$, $x_n=y$ and $d(f(x_i),x_{i+1})\leq\rho$
for all $i$ with $0\leq i\leq n-1$.
Denote by $\Omega(f)$ and $CR(f)$ the set of nonwandering and chain recurrent points of $f$ respectively.
We say that $\Lambda\subset X$ is {\em invariant} if $f(\Lambda)=\Lambda$.

With these definitions we can state our first result.

\begin{thrm}
\label{thA}
Let $f$ be a homeomorphism of a compact metric space $X$.
\begin{enumerate}
\item
$Sh(f)$ is an invariant set (possibly nonempty or noncompact).
\item
$f$ has the POTP if and only if $Sh(f)=X$.
\item
If $CR(f)\subset Sh(f)$, then $CR(f)=\Omega(f)$.
\end{enumerate}
\end{thrm}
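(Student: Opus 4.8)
The plan is to split the claimed equality into the two inclusions $\Omega(f)\subset CR(f)$ and $CR(f)\subset\Omega(f)$. The first holds for every homeomorphism of a compact metric space and is independent of the hypothesis: given $x\in\Omega(f)$ and $\rho>0$, I would use uniform continuity of $f$ to absorb one iterate into the error, apply the nonwandering property to a sufficiently small ball around $x$ to obtain a point that returns after some time $k\in\mathbb{N}^+$, and then splice $x$ together with the corresponding orbit segment into a $\rho$-chain from $x$ to itself. So the real content lies in the reverse inclusion, where the assumption $CR(f)\subset Sh(f)$ is used.

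To prove $CR(f)\subset\Omega(f)$, I would fix $x\in CR(f)$ and an arbitrary neighborhood $U$ of $x$, and aim to produce $k\in\mathbb{N}^+$ with $f^k(U)\cap U\neq\emptyset$. Pick $\epsilon>0$ with $B(x,\epsilon)\subset U$. Since $x$ is shadowable by hypothesis, there is $\delta>0$ such that every $\delta$-pseudo-orbit $\xi$ with $\xi_0=x$ can be $\epsilon$-shadowed. Chain recurrence applied with $\rho=\delta$ then yields a $\delta$-chain $x=x_0,x_1,\dots,x_n=x$ from $x$ to itself, with $n\geq 1$.

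The key step is to promote this finite loop into a bi-infinite $\delta$-pseudo-orbit to which shadowability applies, by repeating the chain periodically with period $n$, that is $\xi_{qn+j}=x_j$ for $0\leq j\leq n-1$ and $q\in\mathbb{Z}$, so that $\xi_0=\xi_n=x$; the pseudo-orbit inequality at the junctions holds precisely because the chain closes up at $x$, where it reads $d(f(x_{n-1}),x_0)=d(f(x_{n-1}),x_n)\leq\delta$. Shadowing this $\xi$ by some $z$ then forces $d(z,x)\leq\epsilon$ and $d(f^n(z),x)\leq\epsilon$, i.e.\ both $z$ and $f^n(z)$ lie in $B(x,\epsilon)\subset U$, whence $f^n(U)\cap U\neq\emptyset$ with $n\geq 1$. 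Since $U$ is arbitrary, $x\in\Omega(f)$.

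I expect the main obstacle to be the periodic-extension device together with the bookkeeping that guarantees $n\geq 1$: one must ensure the chain used is genuinely nontrivial, so that the return time it produces is a legitimate positive time witnessing the nonwandering property rather than a vacuous one. Once the loop is correctly turned into a periodic pseudo-orbit through $x$, the shadowing point delivers the required recurrence with no further work.
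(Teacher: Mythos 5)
Your proposal addresses only item (3) of the theorem. For that item your argument is correct and is essentially the paper's own: the paper proves a lemma stating $CR(f)\cap Sh(f)\subset \Omega(f)$ by exactly the device you describe --- take the $\delta$ from shadowableness of $x$, take a $\delta$-chain from $x$ to itself, extend it periodically to a bi-infinite $\delta$-pseudo-orbit through $x$, shadow it by some $y$, and observe that $y$ and $f^k(y)$ both lie in $B[x,\epsilon]$ --- and then combines this with the standard inclusion $\Omega(f)\subset CR(f)$, which you also sketch correctly. Your attention to the requirement $n\geq 1$ is reasonable bookkeeping but not a real obstacle.

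The genuine gap is that items (1) and (2) are not addressed at all, and item (2) in particular is the substantive part of the theorem. The direction ``POTP implies $Sh(f)=X$'' is trivial, but the converse requires converting \emph{pointwise} shadowing, where $\delta$ depends on the base point $x$ of the pseudo-orbit, into the \emph{uniform} $\delta$ demanded by the POTP. The paper does this via a compactness argument (its Lemma on the ``POTP through a compact set $K$'', applied with $K=X$): assuming the POTP fails, one takes $\epsilon>0$ and a sequence of $\frac{1}{k}$-pseudo-orbits $\xi^k$ that cannot be $2\epsilon$-shadowed, extracts a subsequence with $\xi^k_0\to p$, replaces the $0$-th entry of each $\xi^k$ by $p$ so that the modified sequences pass through $p$ and are still $\delta$-pseudo-orbits for large $k$ (where $\delta$ comes from the shadowableness of $p$), shadows them, and derives a contradiction. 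Without some such uniformization step, item (2) is unproven; note also that the paper shows this compactness is essential, since the analogous statement through a noncompact set $K$ is false. Item (1), the invariance of $Sh(f)$, likewise needs an argument (uniform continuity of $f$ and $f^{-1}$ to transport pseudo-orbits through $f(x)$ back to pseudo-orbits through $x$), and the parenthetical ``possibly nonempty or noncompact'' is settled in the paper by an explicit example (the identity on the union of a Cantor set and an interval). As written, your proposal proves one third of the statement.
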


Recall that a homeomorphism $f:X\to X$ is {\em minimal}
if the orbit $\{f^n(x):n\in\mathbb{Z}\}$ of every point $x\in X$ is dense in $X$.
It is well known that a minimal homeomorphism of a compact connected metric space with more than one point
does not have the POTP \cite{a}, \cite{m}.
Adapting the arguments in \cite{a} we can prove the following.

\begin{thrm}
\label{thC}
A minimal homeomorphism of a compact connected metric space with more than one point has no shadowable points.
\end{thrm}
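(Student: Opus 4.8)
The plan is to argue by contradiction: assume some $x\in Sh(f)$ and manufacture, at a fixed scale $\epsilon$, a $\delta$-pseudo-orbit $\xi$ with $\xi_0=x$ that cannot be $\epsilon$-shadowed, directly contradicting shadowability of $x$. Since $X$ is connected with more than one point, $D:=\mathrm{diam}(X)>0$, and for any $a\in X$ there is a point at distance $\ge D/2$ from $a$ (otherwise every pair would be within $D$, forcing $\mathrm{diam}(X)<D$). I fix $\epsilon<D/4$, let $\delta>0$ be provided by shadowability of $x$ for this $\epsilon$, and pick $q$ with $d(x,q)\ge D/2>2\epsilon$. The construction is an adaptation of the no-POTP argument of \cite{a}, the new point being to route the bad pseudo-orbit through the prescribed initial condition $\xi_0=x$; by Theorem \ref{thA}(1) every iterate $f^k(x)$ is shadowable as well, so centering the construction at $x$ costs nothing.

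I would first exploit connectedness in its metric form: a compact connected metric space is uniformly chain-connected, so for the above $\delta$ there is a finite spatial chain $x=w_0,w_1,\dots,w_m=q$ with $d(w_i,w_{i+1})<\delta/2$. Spatial proximity is not yet a pseudo-orbit condition, which compares $f(\xi_n)$ with $\xi_{n+1}$, and converting one into the other is where minimality enters. Because $f$ is minimal, each $w_i$ is uniformly recurrent, so there is $N_i\in\mathbb{N}^+$ with $d(f^{N_i}(w_i),w_i)<\delta/2$. I then build $\xi$ in successive blocks: on block $i$ it follows the genuine orbit $f^k(w_i)$ for $0\le k\le N_i$, returning near $w_i$, and at the block boundary it jumps from $f^{N_i}(w_i)$ to $w_{i+1}$, a jump of size at most $d(f^{N_i}(w_i),w_i)+d(w_i,w_{i+1})<\delta$. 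Splicing in the true orbit of $x$ for the negative times and of $q$ afterwards yields a genuine bi-infinite $\delta$-pseudo-orbit with $\xi_0=x$ whose ``marker'' is transported a definite distance ($d(x,q)>2\epsilon$) across $X$.

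The contradiction must come from the fact that a single genuine orbit cannot track this transport at all times simultaneously. The transparent model is an irrational rotation of the circle: there the analogue of the above produces the orbit of $R_{\alpha+\delta}$, whose angular discrepancy with any $R_{\alpha}$-orbit grows linearly, so $\epsilon$-shadowing is impossible; and it is precisely connectedness that lets one keep nudging in a fixed direction indefinitely, which is impossible on a totally disconnected space. In the general minimal setting I would reproduce this by concatenating many marching blocks, using the uniform recurrence of $q$ to bring the pseudo-orbit back near $x$ and repeat the march, so as to force the accumulated discrepancy between $\xi$ and any candidate shadowing orbit to exceed $\epsilon$.

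\emph{The main obstacle} is exactly this accumulation step. A general compact connected metric space carries no group or covering structure in which ``drift'' is additive, so one cannot simply read off an unbounded winding as on the circle; guaranteeing that no single orbit stays $\epsilon$-close through all the successive transports of the marker is the crux. This is where connectedness, which sustains arbitrarily long small-step chains that progress in a fixed ``direction,'' and minimality, which supplies the recurrence times used to splice the blocks and to re-center the construction near $x$, must be combined in the careful quantitative way carried out in \cite{a}.
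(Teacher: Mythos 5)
There is a genuine gap, and you have named it yourself: your construction produces a $\delta$-pseudo-orbit through $x$ that marches from $x$ to a far-away point $q$, but you give no argument that this pseudo-orbit cannot be $\epsilon$-shadowed, and in fact there is no reason it cannot be. A shadowing point is only required to stay $\epsilon$-close to the pseudo-orbit \emph{at the corresponding times}; since genuine orbits of a minimal homeomorphism are dense, an orbit that starts near $x$ and later passes near $q$ is not in any way anomalous, so the ``transport of the marker'' yields no contradiction by itself. Your proposed fix --- concatenating many marches and accumulating discrepancy --- is precisely the step that has no meaning in a general compact connected metric space (no group or covering structure in which drift adds up), as your final paragraph concedes. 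So the proof stops exactly where the actual work begins; deferring the crux to ``the careful quantitative way carried out in \cite{a}'' does not close it, because Aoki's argument is not a drift-accumulation argument at all.

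The mechanism that does work runs in the opposite logical direction: instead of building an escaping pseudo-orbit and trying to show it is unshadowable, one \emph{uses} the assumed shadowability on a pseudo-orbit that \emph{returns} near $x$. Since $f$ is minimal, $x$ is nonwandering, so there are a point $w$ and $k\in\mathbb{N}^+$ with $w,f^k(w)\in B[x,\delta/2]$; repeating the finite segment $w,f(w),\dots,f^{k-1}(w)$ periodically gives a $\delta$-pseudo-orbit through a neighborhood of $x$, and shadowing it (this is Lemma \ref{l2}, which rests on Lemma \ref{l1}) produces a point $y$ whose entire $f^k$-orbit is contained in $B[x,\epsilon]$. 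This is where connectedness enters, and not through chain-connectedness: a minimal homeomorphism of a compact \emph{connected} space is totally minimal (Banks \cite{b}), so $f^k$ is itself minimal, hence the $f^k$-orbit of $y$ is dense and $X\subset B[x,\epsilon]$. As $\epsilon$ is arbitrary, $X=\{x\}$, contradicting that $X$ has more than one point. In short: shadowability plus recurrence forces a confined orbit, and confinement (not drift) is what contradicts minimality. Your uniform chain-connectedness step and the choice of the far point $q$ are not needed anywhere in this scheme.
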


Recall also that  $f$ is
{\em distal} if $\inf_{n\in\mathbb{Z}}d(f^n(x),f^n(y))>0$ for all distinct points $x,y\in X$. We
say that $X$ is totally disconnected at $p\in X$ if the connected component of $X$ containing $p$ is $\{p\}$.
As in \cite{bdrss} we define
$$
X^{deg}=\{p\in X:X\mbox{ is totally disconnected at }p\}.
$$
With these notations we obtain the following result.
\begin{thrm}
\label{thB}
If $f:X\to X$ is distal homeomorphism of a compact metric space $X$, then $Sh(f)\subset X^{deg}$.
\end{thrm}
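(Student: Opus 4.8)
The plan is to argue by contraposition: assuming $X$ is \emph{not} totally disconnected at $p$, I will exhibit, for every $\delta$ that shadowability could offer, a $\delta$-pseudo-orbit through $p$ that cannot be $\epsilon$-shadowed, so that $p\notin Sh(f)$. Let $C$ be the connected component of $p$ and suppose $C\neq\{p\}$; fix $q\in C$ with $q\neq p$. Distality gives $\alpha:=\inf_{n\in\mathbb{Z}}d(f^n(p),f^n(q))>0$, and since $\alpha\le d(p,q)$ I set $\epsilon:=\alpha/3$ and let $\delta>0$ be the constant furnished by shadowability of $p$ at this $\epsilon$; I also fix $\delta'\in(0,\delta/2)$. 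The model case that guides everything is the \emph{single switch}: if it were legal to jump directly from the orbit of $p$ to the orbit of $q$ at time $0$, any shadow point $x$ would satisfy $d(f^n(x),f^n(p))\le\epsilon$ for $n\le0$ and $d(f^n(x),f^n(q))\le\epsilon$ for $n\ge1$; reading the estimate at $n=0$ gives $d(x,p)\le\epsilon$ outright, while the forward estimate, collapsed to time $0$, would give $d(x,q)\le\epsilon$, forcing $\alpha\le d(p,q)\le 2\epsilon=2\alpha/3$, a contradiction.

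The tool that powers the collapse is the recurrence coming from distality. I will use two classical facts: a distal homeomorphism of a compact metric space is pointwise almost periodic, and a finite product of distal systems is again distal. Hence the diagonal orbit of any finite tuple is almost periodic, so its return times to a given neighbourhood are syndetic in $\mathbb{Z}$, in particular unbounded in both directions: finitely many prescribed points recur simultaneously, forward and backward. This is exactly what is needed to turn a shifted-time tracking estimate into a same-time inequality by passing to a limit along common return times, which is what distality is then contradicted against.

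The obstacle to the naive single switch is that the direct jump $p\rightsquigarrow q$ has size $\approx d(p,q)\ge\alpha$, far larger than $\delta$. Here connectedness enters: since $C$ is connected it is well-chained, so there is a finite chain $c_0=p,c_1,\dots,c_m=q$ in $C$ with $d(c_i,c_{i+1})<\delta'$. Choosing a common return time $N\ge1$ of the tuple $(c_0,\dots,c_m)$ with $d(f^N(c_i),c_i)<\delta'$ for all $i$, I build the \emph{staircase} pseudo-orbit $\xi$ given by $\xi_n=f^n(p)$ for $n\le0$, by $\xi_n=f^{\,n-iN}(c_i)$ on each block $iN\le n<(i+1)N$, and by $\xi_n=f^{\,n-mN}(q)$ for $n\ge mN$. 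Inside a block the increment is exact, and at a boundary it equals $d(f^N(c_i),c_{i+1})\le d(f^N(c_i),c_i)+d(c_i,c_{i+1})<2\delta'<\delta$, so $\xi$ is a genuine $\delta$-pseudo-orbit with $\xi_0=p$. A shadow point $x$ then yields $d(f^n(x),f^n(p))\le\epsilon$ for $n\le0$ (whence $d(x,p)\le\epsilon$ at once) and, reading the last block, shows that $f^{mN}(x)$ tracks the orbit of $q$ within $\epsilon$.

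The delicate point, and the step I expect to be the main obstacle, is that legalising the switches has cost a macroscopic time-shift: the forward information constrains $f^{mN}(x)$ rather than $x$, so to contradict distality I must still prove $d(x,q)\le\epsilon$, i.e. undo the shift $mN$ accumulated across the connected component. The intended route is to feed the forward estimate, together with the simultaneous recurrence of the relevant tuple (for instance $(x,q)$, or an augmented tuple produced after $x$ is known), through a limit along common return times so as to recover a time-$0$ comparison; one may also arrange the chain as a loop returning to $p$ to balance the shift. This reconciliation is genuinely harder than the equicontinuous converse mentioned in the statement, where the return times are \emph{uniform} and the shift is absorbed directly; in the distal case only the non-uniform, pointwise recurrence is available, and making its compounding over the $m$ blocks harmless is the crux of the argument.
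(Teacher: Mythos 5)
Your construction up to the production of the shadow point is sound and is in fact the same Aoki-style device the paper uses (a $\delta'$-chain across the component, legalized as a pseudo-orbit by the recurrence that distality provides). But the proof is incomplete, and the gap is exactly where you place it: you never obtain the same-time comparison $d(x,q)\leq\epsilon$, and the routes you sketch for undoing the shift cannot work as stated, because the lower bound furnished by distality is \emph{pair-dependent}. Grant yourself the full recurrence-limit collapse: since finite products of distal systems are distal, pairs are jointly recurrent in both time directions, and this does upgrade your two half-orbit estimates to full-orbit estimates, $d(f^n(x),f^n(p))\leq\epsilon$ and $d(f^n(x),f^{n-mN}(q))\leq\epsilon$ for all $n\in\mathbb{Z}$. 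Triangulating gives $d(f^n(p),f^n(q'))\leq 2\epsilon$ for all $n$, where $q'=f^{-mN}(q)$. This contradicts nothing: distality only asserts $\inf_n d(f^n(p),f^n(q'))>0$ for the \emph{new} pair $(p,q')$, and that infimum bears no relation to $\alpha=\inf_n d(f^n(p),f^n(q))$; it may well be smaller than $2\epsilon=2\alpha/3$. The dependence is circular --- $\epsilon$ was fixed from the pair $(p,q)$, $\delta$ from $\epsilon$, and the chain, the return time $N$, hence the shift $mN$, from $\delta$ --- so you cannot retroactively shrink $\epsilon$ below the distality constant of the shifted pair. Your loop variant has the same defect: it yields $d(f^n(p),f^n(f^{-T}(p)))\leq 2\epsilon$ for all $n$, again a comparison of a different pair, again no contradiction unless $f^{-T}(p)=p$.

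The paper escapes this trap by never comparing two moving orbits: every estimate is made against the \emph{fixed} ball $B[z,\epsilon]$, which is immune to time shifts. The extra ingredients are Lemma \ref{l2} (since $z$ is nonwandering and shadowable, there exist $k$ and $y$ whose entire orbit under $g=f^k$ lies in $B[z,\epsilon]$) together with Lemma \ref{fk} (so that $z\in Sh(g)$). Both infinite tails of the Aoki pseudo-orbit are taken along this trapped orbit of $y$, not along the orbit of $p$; consequently the shadow point $\hat{z}$ lies in $B[z,2\epsilon]$ at all times outside one finite window, recurrence of $\hat{z}$ itself (a single point, no pairs involved) pushes the finitely many exceptional times into $B[z,3\epsilon]$, and since the pseudo-orbit passes within $\epsilon$ of a $\delta$-net of the component $F$, one concludes $F\subset B[z,5\epsilon]$, contradicting $\epsilon<\frac{1}{11}diam(F)$. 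If you want to salvage your two-point scheme, you must replace the goal ``compare $x$ with $q$ at time $0$'' by ``trap the whole orbit of the shadow point near $p$,'' which is precisely what Lemma \ref{l2} makes possible and what your staircase, whose forward tail follows the freely moving orbit of $q$, cannot do.
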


Let us state some short corollaries of Theorem \ref{thB}.
Likewise the minimal, the distal
homeomorphisms of a compact connected metric space with more than one point
do not have the POTP \cite{a}, \cite{m}.
This motivates the following result.

\begin{cor}
\label{thD}
A distal homeomorphism of a compact connected metric space with more than one point
has no shadowable points.
\end{cor}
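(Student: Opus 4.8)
The plan is to obtain this immediately from Theorem \ref{thB}. First I would invoke that theorem, whose hypotheses are exactly those at hand ($f$ a distal homeomorphism of a compact metric space), to conclude that $Sh(f)\subset X^{deg}$. The only remaining task is then to show that $X^{deg}=\emptyset$ whenever $X$ is connected with more than one point; the corollary follows by substitution.

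To see that $X^{deg}$ is empty, I would argue directly from the definition of connected component. Fix any $p\in X$. Since $X$ itself is a connected subset of $X$ containing $p$, and the connected component of $p$ is by definition the maximal connected subset containing $p$, that component must coincide with all of $X$. Because $X$ has more than one point, this component is strictly larger than $\{p\}$, so $X$ fails to be totally disconnected at $p$, i.e. $p\notin X^{deg}$. As $p$ was arbitrary, $X^{deg}=\emptyset$, and hence $Sh(f)\subset X^{deg}=\emptyset$, giving $Sh(f)=\emptyset$.

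I do not anticipate any genuine obstacle: the mathematical substance is entirely carried by Theorem \ref{thB}, and the corollary is a one-line deduction once one observes that connectedness of a space with more than one point precludes total disconnectedness at any of its points. If anything required care it would be the bookkeeping around the edge case of a single point, but that is explicitly excluded by the hypothesis ``more than one point,'' which is precisely what forces the component $X$ to differ from $\{p\}$.
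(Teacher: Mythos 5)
Your proposal is correct and follows exactly the paper's own route: apply Theorem \ref{thB} to get $Sh(f)\subset X^{deg}$, then note that a connected space with more than one point has $X^{deg}=\emptyset$ (the paper states this as obvious; you simply spell out the component argument). Nothing is missing and there is nothing to change.
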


\begin{proof}
Clearly, a connected space with more than one point has no totally disconnected points.
Then, there are no shadowable points for distal homeomorphisms on such a space too by Theorem \ref{thB}.
\end{proof}

The second corollary deals with compact metric spaces exhibiting distal homeomorphisms with the POTP.
Recall that $X$ is {\em totally disconnected} if $X=X^{deg}$.
Every totally disconnected compact metric spaces exhibits a distal homeomorphism with the POTP
(e.g. the identity, see Theorem 2.3.2 p. 79 in \cite{ah}). Conversely, we obtain the following result.

\begin{cor}
\label{c1}
Every compact metric space exhibiting a distal homeomorphism with the POTP is totally disconnected.
\end{cor}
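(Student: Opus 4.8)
The plan is to deduce the conclusion directly by chaining the two structural results already established, namely the POTP characterization in Theorem \ref{thA}(2) and the localization of shadowable points in Theorem \ref{thB}. Since the hypotheses supply a distal homeomorphism $f$ with the POTP on a compact metric space $X$, both theorems apply verbatim, and essentially all the work has already been done upstream.

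First I would invoke Theorem \ref{thA}(2): because $f$ has the POTP on the compact space $X$, every point is shadowable, that is, $Sh(f)=X$. Next I would invoke Theorem \ref{thB}: because $f$ is distal on the compact space $X$, every shadowable point lies in $X^{deg}$, that is, $Sh(f)\subset X^{deg}$. Combining these two inclusions gives $X=Sh(f)\subset X^{deg}\subset X$, whence $X=X^{deg}$, which is exactly the assertion that $X$ is totally disconnected.

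I do not anticipate any genuine obstacle, since the corollary is simply the formal conjunction of the two theorems and carries no independent content. The only point worth verifying is that the hypotheses here—POTP and distality on a compact metric space—match precisely the respective hypotheses of Theorems \ref{thA} and \ref{thB} being invoked, which they do. In that sense the proof is immediate: the interesting work lives entirely in establishing Theorem \ref{thB}, whose distality-plus-compactness argument forces shadowable points into the totally disconnected part of the space.
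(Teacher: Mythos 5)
Your proof is correct and is essentially the paper's own argument: POTP gives $Sh(f)=X$, and Theorem \ref{thB} gives $Sh(f)\subset X^{deg}$, forcing $X=X^{deg}$. The only cosmetic difference is that you cite the full equivalence of Theorem \ref{thA}(2) where the paper uses only the trivial direction (POTP implies every point is shadowable, immediate from the definitions), but this changes nothing of substance.
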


\begin{proof}
The existence of a distal homeomorphism with the POTP implies that every point is
shadowable. Then, the space is totally disconnected by Theorem \ref{thB}.
\end{proof}

It is worth to note that
every distal homeomorphism with the POTP of a compact metric space
is uniformly conjugate to an adding-machine-like map \cite{my}.
Theorem \ref{thB} also implies the following result.
We say that a homeomorphism $f: X\to X$ has the {\em almost POTP} if $Sh(f)$ is dense in $X$.
As in Definition 1 of \cite{bdrss}
we say that the space $X$ is {\em almost totally disconnected} if $X^{deg}$ is dense in $X$.

\begin{cor}
\label{almost}
Every compact metric space $X$ exhibiting a distal homeomorphism with the almost POTP
is almost totally disconnected.
\end{cor}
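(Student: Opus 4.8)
The plan is to chain together the two definitions with Theorem~\ref{thB}, exactly in the spirit of the proofs of Corollaries~\ref{thD} and~\ref{c1}, so that all the genuine content is borrowed from Theorem~\ref{thB} and nothing beyond elementary topology remains. First I would unwind the hypothesis. Since $f:X\to X$ is a distal homeomorphism of the compact metric space $X$ having the almost POTP, the very definition of almost POTP records that $Sh(f)$ is dense in $X$. This is the only use I would make of the word ``almost'' in the hypothesis.

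Next I would invoke Theorem~\ref{thB} for the pair $(f,X)$: as $f$ is distal and $X$ is compact, that theorem yields the inclusion $Sh(f)\subset X^{deg}$. At this stage I would simply combine the two assertions, namely that $Sh(f)$ is dense and that $X^{deg}$ contains $Sh(f)$.

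The only step requiring an explicit (and completely routine) remark is the topological triviality that any subset of $X$ containing a dense subset is itself dense: from $\overline{Sh(f)}=X$ and $Sh(f)\subset X^{deg}$ one gets $X=\overline{Sh(f)}\subset\overline{X^{deg}}\subset X$, so $\overline{X^{deg}}=X$. Hence $X^{deg}$ is dense in $X$, which by the definition of almost totally disconnected is exactly the conclusion that $X$ is almost totally disconnected. I do not anticipate any real obstacle here: the corollary is essentially immediate once Theorem~\ref{thB} is available, and the entire difficulty of the statement is already absorbed into that theorem, with the present argument amounting to passing to closures on both sides of the inclusion $Sh(f)\subset X^{deg}$.
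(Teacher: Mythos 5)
Your proof is correct and is precisely the argument the paper intends (the paper leaves this corollary without an explicit proof, introducing it with ``Theorem \ref{thB} also implies the following result''): combine the density of $Sh(f)$ from the almost POTP with the inclusion $Sh(f)\subset X^{deg}$ of Theorem \ref{thB}, and conclude that $X^{deg}$ is dense. Nothing is missing.
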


On the other hand, Theorem \ref{thB} motivates the question if
$Sh(f)=X^{deg}$ for all distal homeomorphisms $f:X\to X$.
Partial positive answers on compact connected metric spaces with more than one point are given by Corollary \ref{thD}; or
by Theorem \ref{thC} in the transitive case (because, in such a case, $f$ is minimal). In these cases we get $Sh(f)=X^{deg}=\emptyset$.

Another partial positive answer is as follows.
We say that a homeomorphism $f: X\to X$ is {\em equicontinuous}
if for every $\alpha>0$ there is $\beta>0$ such that
$x,y\in X$ and $d(x,y)\leq \beta$ imply
$d(f^n(x),f^n(y))\leq \alpha$ for all $n\in\mathbb{Z}$.
It is easy to see that every equicontinuous homeomorphism of a compact metric space is distal.
For such homeomorphisms we have the following result.

\begin{thrm}
\label{separated}
If $f:X\to X$ is a equicontinuous homeomorphism of a compact metric space $X$, then
$Sh(f)=X^{deg}$.
\end{thrm}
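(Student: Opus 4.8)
The inclusion $Sh(f)\subset X^{deg}$ is already contained in Theorem \ref{thB}, since every equicontinuous homeomorphism of a compact metric space is distal. Thus the real content of the statement is the reverse inclusion $X^{deg}\subset Sh(f)$, i.e. the assertion that every point at which $X$ is totally disconnected is shadowable. The plan is to prove this directly, and the crucial preliminary move is to replace $d$ by the equivalent metric $D(x,y)=\sup_{n\in\mathbb{Z}}d(f^n(x),f^n(y))$. Equicontinuity guarantees that $D$ induces the same topology as $d$ (and, by compactness, is uniformly equivalent to it), so that neither $Sh(f)$ nor $X^{deg}$ is affected by the change of metric; meanwhile $f$ becomes a $D$-isometry, $D(f(x),f(y))=D(x,y)$. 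From this point on I would work only with $D$.

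Fix $p\in X^{deg}$ and $\epsilon>0$. Since $X$ is compact, the connected component of $p$ coincides with its quasi-component, so $\{p\}$ is the intersection of the clopen sets containing $p$; by compactness $p$ therefore has a neighbourhood basis of clopen sets. Choose a clopen $C$ with $p\in C$ and $D$-diameter at most $\epsilon$ (the case $C=X$ forces $\operatorname{diam}_D X\le\epsilon$ and is trivial, so assume $\emptyset\neq C\neq X$). Put $\delta_0=D(C,X\setminus C)$, which is positive because $C$ and $X\setminus C$ are disjoint nonempty compact sets, and take any $0<\delta<\delta_0$. The key point is that, because $f$ is a $D$-isometry, each image $C_n:=f^n(C)$ is clopen with the same diameter $\le\epsilon$ and, what matters most, with the same gap $D(C_n,X\setminus C_n)=\delta_0$ to its complement, uniformly in $n$.

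Now let $\xi$ be any $\delta$-pseudo-orbit with $\xi_0=p$. I claim that $\xi_n\in C_n$ for every $n\in\mathbb{Z}$, which I would prove by induction outward from $n=0$. Indeed, if $\xi_n\in C_n$ then $f(\xi_n)\in C_{n+1}$, so $D(\xi_{n+1},C_{n+1})\le D(\xi_{n+1},f(\xi_n))\le\delta<\delta_0$, which forces $\xi_{n+1}\in C_{n+1}$; the backward step is identical, using that $f^{-1}$ is also a $D$-isometry. Since $f^n(p)\in C_n$ as well and $\operatorname{diam}_D C_n\le\epsilon$, we obtain $D(f^n(p),\xi_n)\le\epsilon$ for all $n$, i.e. $p$ itself $\epsilon$-shadows $\xi$. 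Hence $p\in Sh(f)$, which establishes $X^{deg}\subset Sh(f)$ and completes the proof.

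The step I expect to carry the real weight is the passage to the isometric metric $D$. In the original metric there is no reason for the gaps $d(f^n(C),X\setminus f^n(C))$ to stay bounded away from $0$ as $|n|\to\infty$, and without a uniform lower bound the inductive ``trapping'' argument collapses; equicontinuity is precisely the hypothesis that lets $D$ render this gap independent of $n$. The remaining ingredients---that component equals quasi-component on compacta, whence a clopen neighbourhood basis at $p$, together with the uniform equivalence of $d$ and $D$---are standard and should demand only routine verification.
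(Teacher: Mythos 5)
Your proof is correct and is essentially the paper's argument: both rest on choosing a small clopen neighborhood of $p$ (available since component equals quasi-component on compacta), exploiting the positive gap between that clopen set and its complement to trap the pseudo-orbit inductively in its iterates, and concluding that the orbit of $p$ itself does the $\epsilon$-shadowing. The only difference is presentational: where the paper invokes equicontinuity twice to control iterates in the original metric $d$ (pulling each $\xi_n$ back to time $0$ via $f^{-n}(\xi_n)\in U$), you fold those estimates into the adapted metric $D(x,y)=\sup_{n\in\mathbb{Z}}d(f^n(x),f^n(y))$, under which $f$ is an isometry and the gap is literally invariant --- an equivalent bookkeeping device.
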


From this we obtain the following corollary extending the conclusion of Theorem 4 in \cite{mo} to distal homeomorphisms.

\begin{cor}
\label{thathu}
Let $X$ be a compact metric space and $f : X \to X$ be a distal homeomorphism. Then, $f$ has the POTP
if and only if $X$ is totally disconnected.
\end{cor}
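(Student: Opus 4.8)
The plan is to prove the two implications separately, reducing each to results already established. The forward implication is immediate: if the distal homeomorphism $f$ has the POTP, then $X$ is a compact metric space exhibiting a distal homeomorphism with the POTP, so $X$ is totally disconnected by Corollary~\ref{c1}. Equivalently, and self-containedly, Theorem~\ref{thA}(2) gives $Sh(f)=X$ while Theorem~\ref{thB} gives $Sh(f)\subset X^{deg}$, whence $X=X^{deg}$.

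For the converse I would assume $X$ totally disconnected, so $X=X^{deg}$, and aim to show $Sh(f)=X$; the POTP then follows from Theorem~\ref{thA}(2). The inclusion $Sh(f)\subset X^{deg}=X$ is automatic from Theorem~\ref{thB}, so the whole content is the reverse inclusion $X^{deg}\subset Sh(f)$. One cannot hope to extract this from distality alone, since whether $Sh(f)=X^{deg}$ holds for \emph{arbitrary} distal homeomorphisms is left open in the paper; the totally disconnected hypothesis must be used essentially. The route I would take is to upgrade distality to equicontinuity, that is, to prove the key lemma that a distal homeomorphism of a totally disconnected compact metric space is equicontinuous. Granting this, Theorem~\ref{separated} yields $Sh(f)=X^{deg}=X$ at once, and we are done.

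Hence the real work is the lemma ``distal $+$ totally disconnected $\Rightarrow$ equicontinuous.'' I would attack it through the sup-metric $D(x,y)=\sup_{n\in\mathbb{Z}}d(f^n(x),f^n(y))$: this $D$ is lower semicontinuous, $f$ is a $D$-isometry, distality says exactly that $D$ separates points, and equicontinuity is exactly the assertion that $D$ induces the topology of $X$. So it suffices to show, using a refining sequence of finite clopen partitions of the zero-dimensional space $X$, that the $D$-balls are $d$-open; concretely, that for each clopen $C$ there is a finer clopen set all of whose iterates $f^n$ land in a prescribed atom, so that sufficiently close points have all their iterates synchronized. A safer alternative would be to invoke the structure theory of distal systems: a minimal distal system is an inverse limit of isometric extensions whose fibers embed in $X$ and are therefore totally disconnected, forcing the relevant structure groups to be profinite; a profinite group extension over an equicontinuous base with continuous cocycle is again equicontinuous, so equicontinuity propagates up the tower. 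The non-minimal case would then be reduced via the decomposition of a compact distal system into minimal sets, verifying that equicontinuity holds uniformly across them.

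The main obstacle is precisely this lemma, and the delicacy is that zero-dimensionality alone is not enough: for a pointwise limit $p=\lim f^{n_k}$ in the enveloping semigroup and a clopen $C$ one only gets $p^{-1}(C)=\liminf_k f^{-n_k}(C)$, a countable lattice combination of clopen sets rather than a clopen set, so $p$ need not be continuous from zero-dimensionality by itself. It is distality that must rule out the resulting oscillation, and carrying that control uniformly over all $n$ is where I expect the effort to concentrate.
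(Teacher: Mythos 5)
Your argument is exactly the paper's: the forward direction via Corollary~\ref{c1} (equivalently, Theorem~\ref{thA}(2) plus Theorem~\ref{thB}), and the converse by upgrading distality to equicontinuity on a totally disconnected space and then applying Theorem~\ref{separated} and Theorem~\ref{thA}(2). The lemma you single out as ``the real work'' --- a distal homeomorphism of a totally disconnected compact metric space is equicontinuous --- is precisely what the paper invokes as a known result (Corollary~1.9 of Auslander--Glasner--Weiss \cite{agw}) rather than proving it, so your incomplete sketches of that lemma can simply be replaced by this citation and no gap remains relative to the paper's own proof.
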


\begin{proof}
If $f$ has the POTP, then $X$ is totally disconnected by Corollary \ref{c1}.
Conversely, if $X$ is totally disconnected, then $f$ is equicontinuous (e.g. Corollary 1.9 in \cite{agw}) so
$Sh(f)=X$ (by Theorem \ref{separated}) thus $f$ has the POTP (by Theorem \ref{thA}).
\end{proof}

In particular, we obtain the following result supporting Corollary \ref{almost}.

\begin{cor}
There is a compact metric space exhibiting a distal homeomorphism with the almost POTP
but without the POTP.
\end{cor}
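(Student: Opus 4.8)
The plan is to reduce this existence statement to a purely topological construction by invoking the two results just established. By Theorem~\ref{separated}, an equicontinuous homeomorphism $f$ of a compact metric space satisfies $Sh(f)=X^{deg}$; and by Corollary~\ref{thathu} a distal homeomorphism has the POTP if and only if $X$ is totally disconnected. Since the identity map $\mathrm{id}_X$ is trivially equicontinuous, hence distal, it suffices to produce a single compact metric space $X$ which is \emph{almost totally disconnected but not totally disconnected}, i.e. $X^{deg}$ is dense in $X$ while $X^{deg}\neq X$. For such an $X$, taking $f=\mathrm{id}_X$ gives $Sh(f)=X^{deg}$ dense, so $f$ has the almost POTP, whereas $X$ is not totally disconnected, so $f$ fails the POTP by Corollary~\ref{thathu}. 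I note in passing that this is consistent with Corollary~\ref{almost}, which forces any such example to be almost totally disconnected.

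First I would build the space inside $\mathbb{R}^2$ as
\[
X=\bigl(\{0\}\times[0,1]\bigr)\cup\bigcup_{n\ge 1}\Bigl(\{1/n\}\times\{k/n:0\le k\le n\}\Bigr).
\]
The idea is that the vertical segment $A=\{0\}\times[0,1]$ supplies one nondegenerate connected component, while the $n$-th ``slice'' $\{1/n\}\times\{k/n:0\le k\le n\}$ consists of finitely many isolated points that become equidistributed in the $y$-direction as $n\to\infty$. The key geometric point is that these isolated points accumulate onto \emph{all} of $A$, not merely onto one of its endpoints, so that every point of the segment is a limit of degenerate points.

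Next I would verify the three topological claims. Compactness follows because the only limit points of the slices are the points $(0,t)$ with $t\in[0,1]$, all of which already lie in $A$, so $X$ is closed and bounded. Each point $(1/n,k/n)$ is isolated in $X$ (its nearest neighbours in the same slice are at $y$-distance $1/n$, while the remaining slices and $A$ are at positive $x$-distance), so its connected component is a singleton; hence every slice point lies in $X^{deg}$. On the other hand $A$ is connected and is separated from the isolated points, so it is the connected component of each of its own points, giving $A\cap X^{deg}=\emptyset$ and therefore $X\neq X^{deg}$. Finally, letting $k/n\to t$ as $n\to\infty$ shows $(0,t)\in\overline{X^{deg}}$ for every $t\in[0,1]$, whence $\overline{X^{deg}}=X$ and $X^{deg}$ is dense. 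Combining this with the reduction in the first paragraph completes the proof.

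The step I expect to require the most care is the density of $X^{deg}$. It is tempting to attach the nondegenerate piece to a single convergent sequence of isolated points, but then the isolated points accumulate only at one point of the segment and $X^{deg}$ fails to be dense. The construction above circumvents this precisely by letting the discrete slices become dense in the $y$-coordinate, forcing degenerate points arbitrarily near every point of $A$; checking this equidistribution, together with the isolation of each slice point, is the only genuinely nonroutine part of the argument.
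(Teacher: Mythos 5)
Your proposal is correct and follows the same route as the paper: take the identity map on a compact metric space that is almost totally disconnected but not totally disconnected, apply Theorem \ref{separated} (identity is equicontinuous, so $Sh(\mathrm{id})=X^{deg}$ is dense, giving the almost POTP) and Corollary \ref{thathu} (not totally disconnected, so no POTP). The only place you diverge is in producing the space itself: the paper simply cites known examples (Floyd's example \cite{f} or a cantoroid as in \cite{bdrss}), whereas you construct one explicitly, namely
$$
X=\bigl(\{0\}\times[0,1]\bigr)\cup\bigcup_{n\ge 1}\bigl(\{1/n\}\times\{k/n:0\le k\le n\}\bigr),
$$
and your verification is sound: $X$ is closed and bounded, each slice point is isolated (hence lies in $X^{deg}$), the segment $\{0\}\times[0,1]$ is a nondegenerate connected component (so $X\neq X^{deg}$), and the equidistribution of the slices in the $y$-direction makes $X^{deg}$ dense. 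This buys you a self-contained proof at the cost of a page of elementary point-set checking that the paper avoids by citation; your closing remark about why a single convergent sequence of isolated points would not suffice is exactly the right caution.
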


\begin{proof}
Take an almost totally disconnected compact metric space $X$ which is not totally disconnected
(e.g \cite{f} or a cantoroid as in Definition 2 p. 70 of \cite{bdrss}).
Then, the identity $f: X\to X$ (which is equicontinuous)
has the almost POTP (by Theorem \ref{separated}) but not the POTP (by Corollary \ref{thathu}).
\end{proof}

The author would like to thank professors B. Carvalho and D. Obata for helpful conversations.

\section{Proof of the theorems}

\noindent
Let $X$ be a compact metric space.
We say that a sequence $(x_n)_{n\in\mathbb{Z}}$ of $X$ is {\em through} some subset $K\subset X$ if $x_0\in K$.
We shall use the following auxiliary definition.

\begin{definition}
We say that a homeomorphism $f:X\to X$ has the {\em POTP through $K$}
if for every $\epsilon>0$ there is $\delta>0$ such that every
$\delta$-pseudo-orbit of $f$ through $K$ can be $\epsilon$-shadowed.
\end{definition}

This definition is stronger than the POTP {\em on} $K$
in which the shadowing is guaranteed for pseudo-orbits enterely contained in $K$ only \cite{p0} .
We shall use the following characterization in which
$B[\cdot,\delta]$ denotes the closed $\delta$-ball operation.

\begin{lem}
\label{l1}
A homeomorphism of a compact metric space has the POTP through a subset $K$
if and only if for every $\epsilon>0$ there is $\delta>0$ such
that every $\delta$-pseudo-orbit through $B[K,\delta]$ can be $\epsilon$-shadowed.
\end{lem}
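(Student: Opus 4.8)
The plan is to establish the two implications separately. One direction is immediate from the set inclusion $K\subset B[K,\delta]$, while the reverse direction is the substantive one and rests on the uniform continuity of $f$, which is available because $X$ is compact. For the easy direction, suppose the displayed condition holds and fix $\epsilon>0$; choosing the corresponding $\delta>0$, every $\delta$-pseudo-orbit through $K$ is in particular a $\delta$-pseudo-orbit through $B[K,\delta]$, hence can be $\epsilon$-shadowed. Thus $f$ has the POTP through $K$ with the very same constants, and nothing further is needed here.

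For the nontrivial direction I would assume $f$ has the POTP through $K$ and fix $\epsilon>0$. First I would apply the POTP through $K$ to the value $\epsilon/2$, obtaining $\delta_0>0$ such that every $\delta_0$-pseudo-orbit through $K$ can be $(\epsilon/2)$-shadowed. Then, using the uniform continuity of $f$, I would select $\delta>0$ small enough that $\delta\le\min\{\delta_0/2,\epsilon/2\}$ and, in addition, $d(a,b)\le\delta$ implies $d(f(a),f(b))\le\delta_0/2$. Given any $\delta$-pseudo-orbit $\xi$ through $B[K,\delta]$, the idea is to perturb a single coordinate: pick $k\in K$ with $d(\xi_0,k)\le\delta$ (taking a point of $K$ arbitrarily close to the infimum if it is not attained, which costs only a harmless adjustment of constants) and define $\eta$ by $\eta_0=k$ and $\eta_n=\xi_n$ for $n\neq 0$.

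The sequence $\eta$ coincides with $\xi$ except near the origin, so the only indices where the pseudo-orbit estimate must be re-checked are $n=-1$ and $n=0$. By the triangle inequality and the choice of $\delta$ one gets $d(f(\eta_{-1}),\eta_0)\le d(f(\xi_{-1}),\xi_0)+d(\xi_0,k)\le 2\delta\le\delta_0$ and $d(f(\eta_0),\eta_1)\le d(f(k),f(\xi_0))+d(f(\xi_0),\xi_1)\le \delta_0/2+\delta\le\delta_0$, so $\eta$ is a $\delta_0$-pseudo-orbit through $K$. Applying the POTP through $K$ to $\eta$ yields $x\in X$ with $d(f^n(x),\eta_n)\le\epsilon/2$ for all $n$.

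It remains to transfer this shadowing back to $\xi$. For $n\neq 0$ we have $\eta_n=\xi_n$, hence $d(f^n(x),\xi_n)\le\epsilon/2\le\epsilon$ at once; for $n=0$ the triangle inequality gives $d(x,\xi_0)\le d(x,k)+d(k,\xi_0)\le \epsilon/2+\delta\le\epsilon$. Therefore $x$ $\epsilon$-shadows $\xi$, which proves the displayed condition and completes the argument. I expect the main obstacle to be precisely the behaviour at the perturbed coordinates $n=0$ and $n=-1$: moving the base point onto $K$ creates errors there, and the whole point of invoking uniform continuity (to bound $d(f(k),f(\xi_0))$) and of halving both $\epsilon$ and $\delta_0$ in advance is to absorb these errors so that the final shadowing constant is exactly $\epsilon$.
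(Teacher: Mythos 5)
Your proof is correct and takes essentially the same approach as the paper: both hinge on perturbing the given pseudo-orbit at the single coordinate $n=0$ onto a nearby point of $K$, using uniform continuity of $f$ (available since $X$ is compact) to control the pseudo-orbit errors at indices $-1$ and $0$, and then transferring the shadowing point back to the original sequence via the triangle inequality. The only difference is packaging — the paper runs the argument by contradiction along a sequence of $\frac{1}{k}$-pseudo-orbits and settles for $2\epsilon$-shadowing, whereas you argue directly by halving the constants in advance; incidentally, the possible non-attainment of the distance from $\xi_0$ to $K$, which you explicitly flag and absorb into the constants, is glossed over in the paper's own proof.
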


\begin{proof}
Obviously we only have to prove the necessity.
Suppose by contradiction that a homeomorphism $f$ of a compact metric space $X$
has the POTP through $K$ but there are $\epsilon>0$ and a sequence
of $\frac{1}{k}$-pseudo-orbits $(\xi^k)_{k\in\mathbb{N}^+}$
through $B[K,\frac{1}{k}]$ which cannot be $2\epsilon$-shadowed.

For this $\epsilon$ we take $\delta$ from the POTP through $K$. We can assume $\delta<\epsilon$.
It follows from the definition that there is a sequence $x^k\in K$ such that $d(\xi^k_0,x^k)\leq\frac{1}{k}$ for all $k\in\mathbb{N}^+$.
As $X$ is compact, $f$ is uniformly continuous so we can fix $k$ large such that
$\max\{d(f(\xi^k_0),f(x^k)),\frac{1}{k}\}\leq \frac{\delta}{2}$.
Once we fix this $k$ we
define the sequence $\hat{\xi}=(\hat{\xi}_n)_{n\in\mathbb{Z}}$ by
$$
\hat{\xi}_n = \left\{
\begin{array}{rcl}
\xi^k_n,& \mbox{if} & n\neq0\\
x^k,  & \mbox{if} & n=0.
\end{array}
\right.
$$

Clearly $d(f(\hat{\xi}_n),\hat{\xi}_{n+1})\leq\frac{1}{k}\leq \delta$ for $n\neq -1,0$.
Since
$$
d(f(\hat{\xi}_{-1}),\hat{\xi}_0)=d(f(\xi^k_{-1}),x^k)
\leq d(f(\xi^k_{-1}),\xi^k_0)+d(\xi^k_0,x^k)\leq \frac{1}{k}+\frac{1}{k}=\frac{2}{k}\leq\delta
$$
and
$$
d(f(\hat{\xi}_0),\hat{\xi}_1)=d(f(x^k),\xi^k_1)\leq d(f(x^k),f(\xi^k_0))+d(f(\xi^k_0),\xi^k_1)\leq\frac{\delta}{2}+\frac{\delta}{2}=\delta
$$
we see that $\hat{\xi}$ is a $\delta$-pseudo-orbit.
Since $\hat{\xi}_0=x^k\in K$ by definition we obtain that
$\hat{\xi}$ can be $\epsilon$-shadowed, namely,
there is $y\in X$ such that $d(f^n(y),\hat{\xi}_n)\leq\epsilon$ for every $n\in\mathbb{Z}$.

Clearly $d(f^n(y),\xi^k_n)=d(f^n(y),\hat{\xi}_n)\leq \epsilon\leq 2\epsilon$
for $n\neq 0$.
For $n=0$ we obtain
$$
d(f^n(y),\xi^k_n)=d(y,\xi^k_0)\leq d(y,x^k)+d(x^k,\xi^k_0)
=d(y,\hat{\xi}_0)+\frac{1}{k}\leq \epsilon+\frac{\delta}{2}\leq 2\epsilon
$$
thus $d(f^n(y),\xi^k_n)\leq2\epsilon$ for all $n\in\mathbb{Z}$.
It follows that $\xi^k$
can be $2\epsilon$-shadowed, that is absurd.
This contradiction proves the result.
\end{proof}

The proof of the lemma below is similar to Lemma 1 in \cite{a}.

\begin{lem}
\label{l2}
Let $f$ be a homeomorphism of a compact metric space.
Then, for every $z\in \Omega(f)\cap Sh(f)$ and every $\epsilon>0$ there
are $k\in \mathbb{N}^+$ and $y\in X$ such that $f^{pk}(y)\in B[z,\epsilon]$ for every $p\in\mathbb{Z}$.
\end{lem}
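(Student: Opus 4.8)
The plan is to combine the shadowability of $z$ with its nonwandering character: I would manufacture a genuinely $k$-periodic $\delta$-pseudo-orbit passing through $z$, shadow it, and then read off the conclusion at the times that are multiples of the period $k$. Concretely, given $\epsilon>0$, I would first invoke the shadowability of $z$ to obtain $\delta>0$ (which I may take smaller than $\epsilon$) such that every $\delta$-pseudo-orbit $\xi$ with $\xi_0=z$ can be $\epsilon$-shadowed. Since $X$ is compact, $f$ is uniformly continuous, so I would next fix $\gamma\in(0,\delta)$ with $d(a,b)\le\gamma\Rightarrow d(f(a),f(b))\le\delta$.

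Now I would use that $z$ is nonwandering. Applying the definition to the neighborhood $U=B(z,\gamma)$ furnishes $k\in\mathbb{N}^+$ with $f^k(U)\cap U\neq\emptyset$; choosing a point of this intersection yields $w\in U$ with $f^k(w)\in U$, whence $d(w,z)\le\gamma$ and $d(f^k(w),z)\le\gamma$. With this $k$ and $w$ I would build the bi-infinite sequence $\xi$ that is $k$-periodic (i.e.\ $\xi_{n+k}=\xi_n$) and determined on one period by $\xi_0=z$ and $\xi_j=f^j(w)$ for $1\le j\le k-1$. For the indices $1\le n\le k-2$ the transition is exact, $d(f(\xi_n),\xi_{n+1})=d(f^{n+1}(w),f^{n+1}(w))=0$, so the only transitions that require attention are the two \emph{seams}: at $n=0$ one has $d(f(\xi_0),\xi_1)=d(f(z),f(w))\le\delta$ by the choice of $\gamma$ together with $d(z,w)\le\gamma$, while at $n=k-1$ one has $d(f(\xi_{k-1}),\xi_k)=d(f^k(w),z)\le\gamma\le\delta$. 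By periodicity every transition reduces to one of these, so $\xi$ is a $\delta$-pseudo-orbit with $\xi_0=z$. Shadowability then provides $y\in X$ with $d(f^n(y),\xi_n)\le\epsilon$ for all $n\in\mathbb{Z}$, and since $\xi_{pk}=\xi_0=z$ for every $p\in\mathbb{Z}$ we conclude $f^{pk}(y)\in B[z,\epsilon]$, which is exactly the assertion.

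The step I expect to be the crux is the coordination of the quantifiers at the two seams: shadowability forces $\delta$ to be chosen from $\epsilon$ \emph{before} the recurrence time $k$ is known, so the construction must be arranged so that $k$ is supplied afterward by the nonwandering property and so that a \emph{single} small neighborhood $U=B(z,\gamma)$ simultaneously controls both $w$ and $f^k(w)$. Passing from $z$ to $f(w)$ across the first seam is what requires uniform continuity (hence the auxiliary constant $\gamma$), and this is the only place where the compactness hypothesis is genuinely used; once the seams are under control the periodicity does the rest and the evaluation at multiples of $k$ is immediate.
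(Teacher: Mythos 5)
Your proof is correct in substance but takes a genuinely different route from the paper's. The paper builds its periodic pseudo-orbit entirely out of the orbit segment of the returning point (so the $0$th entry is the returning point $x$, not $z$), and therefore must invoke its Lemma \ref{l1} --- the upgrade saying a shadowable point also shadows pseudo-orbits that start merely \emph{near} it, i.e.\ through $B[z,\delta]$ --- to get the shadowing. You instead splice $z$ itself into position $0$ of the periodic sequence and repair the resulting seam with uniform continuity of $f$, which lets you quote only the bare definition of shadowability; in effect you have inlined the mechanism behind Lemma \ref{l1}, whose own proof uses exactly this splice-and-uniform-continuity device. Your version is more self-contained and gives the conclusion slightly more cleanly: since $\xi_{pk}=z$ exactly, $d(f^{pk}(y),z)\le\epsilon$ is immediate, whereas the paper shadows at scale $\epsilon/2$ and then runs a triangle inequality through $x$.

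One corner of your argument fails as written: the case $k=1$. There your recipe makes the range $1\le j\le k-1$ empty, so $\xi_n=z$ for all $n$, and the unique transition is $d(f(z),z)$, not $d(f(z),f(w))$ as claimed (the two seams coincide). The best bound available from your constants is $d(f(z),z)\le d(f(z),f(w))+d(f(w),z)\le\delta+\gamma$, which can exceed $\delta$, so shadowability cannot be applied. The repair is pure constant-chasing: choose $\gamma\le\delta/2$ such that $d(a,b)\le\gamma$ implies $d(f(a),f(b))\le\delta/2$; then every seam, including the doubled one at $k=1$, is at most $\delta$, and the rest of your proof goes through verbatim. (The paper's proof avoids this case automatically because its pseudo-orbit passes through $x$ rather than $z$, with both seam estimates already allocated $\delta/2$ each.)
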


\begin{proof}
Fix $z\in \Omega(f)\cap Sh(f)$ and $\epsilon>0$.
Let $\delta>0$ be given by Lemma \ref{l1} for $\frac{\epsilon}{2}$ with $K=\{z\}$. Obviously we can assume $\delta<\epsilon$.
Since $z\in \Omega(f)$, there are $x\in X$ and $k\in\mathbb{N}^+$ such that
$x,f^k(x)\in B[z,\frac{\delta}{2}]$.
Now consider the sequence $(x_n)_{n\in\mathbb{Z}}$ defined by
$x_{pk+r}=f^r(x)$ for $p\in \mathbb{Z}$ and $0\leq r<k$.
Obviously $(x_n)_{n\in\mathbb{Z}}$ is a $\delta$-pseudo-orbit with $x_0\in B[z,\delta]$, and so,
by Lemma \ref{l1}, there is $y\in X$ such that
$d(f^n(y),x_n)\leq\epsilon$ for every $n\in \mathbb{Z}$.
Taking $n=pk$ with $p\in\mathbb{Z}$ we obtain $d(f^{pk}(y),x)\leq \frac{\epsilon}{2}$, and so,
$d(f^{pk}(y),z)\leq d(f^{pk}(y),x)+d(x,z)\leq \frac{\epsilon}{2}+\frac{\epsilon}{2}=\epsilon$
for all $p\in\mathbb{Z}$.
\end{proof}

Clearly if $f$ has the POTP through $K$, then
every point in $K$ is shadowable.
The converse is true when $K$ is compact by the following result.

\begin{lem}
\label{naosei}
A homeomorphism of a compact metric space has the
POTP through a compact subset $K$ if and only if every point in $K$ is shadowable.
\end{lem}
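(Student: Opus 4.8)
The forward implication is exactly the observation recorded just before the statement: if $f$ has the POTP through $K$, then for a fixed $x\in K$ the $\delta$ furnished by the definition applies in particular to every $\delta$-pseudo-orbit $\xi$ with $\xi_0=x$, so $x$ is shadowable. Hence the whole content lies in the converse, and the plan is to derive it from Lemma \ref{l1} together with the compactness of $K$.

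The starting remark is that shadowability of a point $x$ is nothing but the POTP through the singleton $\{x\}$: both require that for every $\epsilon>0$ there be $\delta>0$ such that every $\delta$-pseudo-orbit $\xi$ with $\xi_0=x$ can be $\epsilon$-shadowed. This lets me feed each singleton into Lemma \ref{l1}. Fixing $\epsilon>0$ and applying Lemma \ref{l1} to $\{x\}$ (noting $B[\{x\},\delta]=B[x,\delta]$), I obtain for each $x\in K$ a number $\delta_x>0$ such that every $\delta_x$-pseudo-orbit through $B[x,\delta_x]$ can be $\epsilon$-shadowed. This is the crucial upgrade: shadowability only controls pseudo-orbits starting \emph{exactly} at $x$, whereas Lemma \ref{l1} converts it into control over all pseudo-orbits whose initial point merely lies within distance $\delta_x$ of $x$.

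Next I would invoke compactness. The open balls $\{B(x,\delta_x):x\in K\}$ cover $K$, so there are finitely many centers $x_1,\dots,x_m$ with $K\subset\bigcup_{i=1}^{m}B(x_i,\delta_{x_i})$. Setting $\delta=\min_{1\le i\le m}\delta_{x_i}>0$, I take any $\delta$-pseudo-orbit $\xi$ with $\xi_0\in K$. Then $\xi_0\in B(x_i,\delta_{x_i})\subset B[x_i,\delta_{x_i}]$ for some $i$, and since $\delta\le\delta_{x_i}$ the sequence $\xi$ is a $\delta_{x_i}$-pseudo-orbit through $B[x_i,\delta_{x_i}]$; by the property attached to $x_i$ it can therefore be $\epsilon$-shadowed. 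As $\epsilon>0$ was arbitrary, this is exactly the POTP through $K$.

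The main obstacle, and the reason Lemma \ref{l1} is indispensable, is the gap between a pseudo-orbit starting exactly at one of the finitely many centers $x_i$ and one whose initial point merely lies somewhere in $K$ near $x_i$: a finite subcover supplies only finitely many centers, yet $\xi_0$ ranges over all of $K$. Lemma \ref{l1} closes precisely this gap by absorbing a ball around each center into the shadowable behaviour, after which the finite subcover and the minimum $\delta$ render the constant uniform. Compactness of $K$ enters only at this last step, to pass from the pointwise radii $\delta_x$ to a single positive $\delta$; without it the infimum of the $\delta_x$ could collapse to zero, which is why the hypothesis that $K$ be compact cannot be dropped.
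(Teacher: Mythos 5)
Your proof is correct, but it follows a genuinely different route from the paper's. You argue directly: you identify shadowability of $x$ with the POTP through $\{x\}$, upgrade it via Lemma \ref{l1} to control over all pseudo-orbits starting in the ball $B[x,\delta_x]$, and then use covering compactness of $K$ (finite subcover plus a minimum of finitely many radii) to produce a single uniform $\delta$ for each $\epsilon$. The paper instead argues by contradiction with sequential compactness: it takes a sequence $(\xi^k)$ of $\tfrac1k$-pseudo-orbits through $K$ that cannot be $2\epsilon$-shadowed, extracts a convergent subsequence $\xi^k_0\to p\in K$, and performs surgery on each $\xi^k$ — replacing its initial term by $p$ — so that shadowableness of the single point $p$ (not Lemma \ref{l1}) applies; continuity of $f$ makes the modified sequence a $\delta$-pseudo-orbit for large $k$, and the resulting shadowing point $2\epsilon$-shadows the original $\xi^k$, a contradiction. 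Your approach buys transparency and economy: it reuses Lemma \ref{l1} as a black box rather than repeating, in essence, the same surgery trick that already appears in that lemma's proof; it makes explicit where compactness of $K$ enters (the finite subcover, without which $\inf_x\delta_x$ could vanish, consistent with Remark \ref{rm1}); and it avoids the internal loss from $\epsilon$ to $2\epsilon$. The paper's approach buys independence from Lemma \ref{l1} in this step, at the cost of redoing the pseudo-orbit modification and the attendant triangle-inequality estimates by hand.
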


\begin{proof}
By the previous remark we only have to prove the sufficiency.

Let $f: X\to X$ be a homeomorphism of a compact metric space $X$.
For this we assume by contradiction that there is a compact subset $K$ such that
every point in $K$ is shadowable but $f$ has no the POTP through $K$.
Then, there are $\epsilon>0$ and a sequence $(\xi^k)_{k\in\mathbb{N}^+}$ of $\frac{1}{k}$-pseudo-orbits through $K$
which cannot be $2\epsilon$-shadowed.

Since $K$ is compact,
we can assume that $\xi^k_0\to p$ for some $p\in K$.
Since $p\in K$, we have that $K$ is shadowable.
Then, we can take $\delta>0$ from the shadowableness of $p$ for the above $\epsilon$.
We define the sequence $\hat{\xi}^k=(\hat{\xi}^k_n)$ by
$$
\hat{\xi}^k_n = \left\{
\begin{array}{rcl}
\xi^k_n,& \mbox{if} & n\neq0\\
p,  & \mbox{if} & n=0,
\end{array}
\right.
\quad\quad k\in \mathbb{N}^+.
$$
Clearly all such sequences are through $\{p\}$.
Moreover,
$$
d(f(\hat{\xi}^k_{n-1}),\hat{\xi}^k_n) = \left\{
\begin{array}{rcl}
d(f(\xi^k_{n-1}),\xi^k_n),& \mbox{if} & n\neq0,1\\
d(f(p),\xi^k_1),& \mbox{if} & n=1 \\
d(f(\xi^k_{-1}),p),  & \mbox{if} & n=0
\end{array}
\right.
$$
so
$$
d(f(\hat{\xi}^k_{n-1}),\hat{\xi}^k_n) \leq \left\{
\begin{array}{rcl}
\frac{1}{k},& \mbox{if} & n\neq0,1\\
d(f(p),f(\xi^k_0))+\frac{1}{k},& \mbox{if} & n=1 \\
d(\xi^k_0,p)+\frac{1}{k},  & \mbox{if} & n=0.
\end{array}
\right.
$$
As $f$ is continuous and $\xi^k_0\to p$, we obtain that
$(\hat{\xi}^k_n)$ is a $\delta$-pseudo-orbit for $k$ large.
Then, for such a $k$ it follows that there is $x_k\in X$ such that
$d(f^n(x_k),\hat{\xi}^k_n)\leq \epsilon$ for every $n\in\mathbb{Z}$.
It follows that $d(f^n(x_k),\xi^k_n)\leq \epsilon$ for $n\neq 0$. Since
$$
d(x_k,\xi^k_0)\leq d(x_k,p)+d(p,\xi^k_0)\leq \epsilon+d(p,\xi^k_0)
$$
we have that $d(f^n(x_k),\xi^k_n)\leq 2\epsilon$ also for $n=0$ with $k$ large.
We conclude that $\xi^k$ {\em can be $2\epsilon$-shadowed for $k$ large}. This is a contradiction
which completes the proof.
\end{proof}

We observe that this lemma is false if $K$ were noncompact (by Remark \ref{rm1}).
Further properties of the shadowable points are given below.

\begin{lem}
\label{invariant}
The set of shadowable points of a homeomorphism of a compact metric space is invariant.
\end{lem}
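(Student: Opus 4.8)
The plan is to prove the two inclusions $f(Sh(f))\subseteq Sh(f)$ and $f^{-1}(Sh(f))\subseteq Sh(f)$; together they give $f(Sh(f))=Sh(f)$, which is precisely invariance. By symmetry it suffices to explain the first one, i.e. to show that if $x\in Sh(f)$ then $f(x)\in Sh(f)$; the argument for $f^{-1}(x)$ is completely analogous, interchanging the roles of $f$ and $f^{-1}$.

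The natural idea is to transfer a pseudo-orbit through $f(x)$ into a pseudo-orbit through a small neighborhood of $x$ by a single global shift, and then invoke shadowableness of $x$. Given $\epsilon>0$, I would first apply Lemma \ref{naosei} with the compact set $K=\{x\}$ to convert the shadowableness of $x$ into the POTP through $\{x\}$, and then apply Lemma \ref{l1} to obtain $\delta>0$ such that every $\delta$-pseudo-orbit through $B[x,\delta]$ can be $\epsilon$-shadowed. Since $f$ is a homeomorphism of a compact metric space, $f^{-1}$ is uniformly continuous, so I can shrink $\delta$ to some $\delta'\le\delta$ for which $d(a,b)\le\delta'$ implies $d(f^{-1}(a),f^{-1}(b))\le\delta$.

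Now take any $\delta'$-pseudo-orbit $\eta$ with $\eta_0=f(x)$ and set $\xi_n=\eta_{n-1}$ for $n\in\mathbb{Z}$. A shift of a pseudo-orbit is again a pseudo-orbit, so $\xi$ is a $\delta'$-pseudo-orbit, in particular a $\delta$-pseudo-orbit. Moreover $d(f(\xi_0),\xi_1)=d(f(\eta_{-1}),\eta_0)=d(f(\eta_{-1}),f(x))\le\delta'$, whence $d(\xi_0,x)=d(\eta_{-1},x)\le\delta$ by the choice of $\delta'$; thus $\xi$ is through $B[x,\delta]$. By Lemma \ref{l1} there is $y\in X$ with $d(f^n(y),\xi_n)\le\epsilon$ for all $n$. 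Putting $z=f(y)$ and $m=n-1$, this reads $d(f^m(z),\eta_m)\le\epsilon$ for every $m\in\mathbb{Z}$, so $\eta$ is $\epsilon$-shadowed. As $\epsilon>0$ was arbitrary, $f(x)\in Sh(f)$.

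The one delicate point — and the only place where the bi-infinite nature of the sequences matters — is that one cannot simply splice the exact value $x$ into the $0$-th slot in order to use the plain shadowableness of $x$, since any such splice would leave one of the two half-lines of $\eta$ untracked. The whole role of the ball characterization in Lemma \ref{l1} is to permit the shifted starting point $\xi_0=\eta_{-1}$ to be merely $\delta$-close to $x$ rather than equal to it, so that the single global shift $\xi_n=\eta_{n-1}$ handles all indices simultaneously; verifying that $\eta_{-1}\in B[x,\delta]$ is exactly what the uniform continuity of $f^{-1}$ supplies, and this is the main (though mild) obstacle.
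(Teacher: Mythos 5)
Your proof is correct, but it takes a genuinely different route from the paper's. The paper transforms a $\delta$-pseudo-orbit $(x_n)_{n\in\mathbb{Z}}$ through $f(x)$ by applying $f^{-1}$ \emph{pointwise}: by uniform continuity of $f^{-1}$, the sequence $(f^{-1}(x_n))_{n\in\mathbb{Z}}$ is a $\delta'$-pseudo-orbit, and it passes \emph{exactly} through $x$ since $f^{-1}(x_0)=x$; hence the plain definition of shadowableness of $x$ applies directly, and uniform continuity of $f$ then converts the resulting $\epsilon'$-shadowing back into an $\epsilon$-shadowing of $(x_n)$. You instead shift indices, $\xi_n=\eta_{n-1}$, which produces a pseudo-orbit passing only \emph{near} $x$ (through $B[x,\delta]$), so you must invoke the ball characterization of Lemma \ref{l1}; in exchange, the conversion back is exact --- $z=f(y)$ shadows $\eta$ with the same $\epsilon$, with no continuity estimate at that stage. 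Both routes work; the paper's is self-contained (only uniform continuity, no auxiliary lemmas), while yours leans on machinery already established earlier in the paper, which is legitimate since Lemmas \ref{l1} and \ref{naosei} precede this statement and do not depend on it. Two minor remarks: (i) your appeal to Lemma \ref{naosei} is redundant, since for $K=\{x\}$ the POTP through $K$ is, word for word, the shadowableness of $x$, so you can pass straight to Lemma \ref{l1} (exactly as the paper does in the proof of Lemma \ref{l2}); (ii) your closing claim that plain shadowableness cannot be used is not quite accurate: the paper's pointwise-$f^{-1}$ transformation does use it, producing a pseudo-orbit exactly through $x$ without any splicing or shifting, at the modest price of one extra uniform-continuity estimate on the shadowing side.
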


\begin{proof}
It suffices to prove that if $x$ is a shadowable point
of a homeomorphism $f: X\to X$ of a compact metric space $X$,
then so are $f(x)$ and $f^{-1}(x)$. We only prove that $f(x)$ is shadowable as the same proof works for
$f^{-1}(x)$.

Fix $\epsilon>0$.
Since $X$ is compact, $f$ is uniformly continuous so
there is $\epsilon'>0$ such that
$d(f(y),f(z))\leq\epsilon$ whenever $y,z\in X$ satisfy $d(y,z)\leq \epsilon'$.
For this $\epsilon'$ we let $\delta'>0$ be given by the shadowableness of $x$.
Again $X$ is compact so $f^{-1}$ is uniformly continuous
thus there is $\delta>0$ such that
$d(f^{-1}(y),f^{-1}(z))\leq \delta'$ whenever $y,z\in X$ satisfy $d(y,z)\leq \delta$.

Now take a $\delta$-pseudo-orbit $(x_n)_{n\in\mathbb{Z}}$ through $f(x)$.
It follows from the choice of $\delta$ that $(f^{-1}(x_n))_{n\in\mathbb{Z}}$ is a $\delta'$-pseudo-orbit
which is obviously through $x$. Then, the choice of $\delta'$ implies
that $(f^{-1}(x_n))_{n\in\mathbb{Z}}$ can be $\epsilon'$-shadowed. So, the choice of $\epsilon'$ implies that
$(x_n)_{n\in\mathbb{Z}}$ can be $\epsilon$-shadowed. This ends the proof.
\end{proof}

The following lemma is proved as in Theorem 3.1.2 of \cite{ah}.

\begin{lem}
\label{l0}
If $f$ is a homeomorphism of a compact metric space, then
$$
CR(f)\cap Sh(f)\subset \Omega(f).
$$
\end{lem}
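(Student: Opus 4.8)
The plan is to show that a point which is simultaneously chain recurrent and shadowable must be nonwandering, by converting a closed chain from $z$ to itself into a genuine bi-infinite pseudo-orbit and then shadowing it. Fix $z\in CR(f)\cap Sh(f)$ and let $U$ be an arbitrary neighborhood of $z$; I must produce $k\in\mathbb{N}^+$ with $f^k(U)\cap U\neq\emptyset$. First I would choose $\epsilon>0$ so small that the closed ball $B[z,\epsilon]$ is contained in $U$, and then invoke the shadowableness of $z$ to obtain $\delta>0$ (which I may take $\leq\epsilon$) such that every $\delta$-pseudo-orbit $\xi$ with $\xi_0=z$ can be $\epsilon$-shadowed.

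Next I would use chain recurrence of $z$ at the scale $\delta$: there is a $\delta$-chain $x_0=z,x_1,\dots,x_n=z$ from $z$ to itself with $n\geq 1$. The key construction is to extend this finite chain to a bi-infinite sequence by periodicity, setting $\xi_{pn+r}=x_r$ for $p\in\mathbb{Z}$ and $0\leq r<n$. I would check this is a $\delta$-pseudo-orbit: away from the junctions $d(f(\xi_m),\xi_{m+1})=d(f(x_r),x_{r+1})\leq\delta$ directly from the chain, while at each junction index $pn-1\to pn$ we have $d(f(x_{n-1}),x_0)=d(f(x_{n-1}),x_n)\leq\delta$ because $x_0=z=x_n$. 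By construction $\xi_0=x_0=z$, so this periodic pseudo-orbit passes through $z$.

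By the choice of $\delta$ there is $y\in X$ with $d(f^m(y),\xi_m)\leq\epsilon$ for all $m\in\mathbb{Z}$. Reading this at $m=0$ and $m=n$, and using $\xi_0=z$ together with $\xi_n=x_0=z$, gives $d(y,z)\leq\epsilon$ and $d(f^n(y),z)\leq\epsilon$; that is, both $y$ and $f^n(y)$ lie in $B[z,\epsilon]\subseteq U$. Hence $f^n(y)\in f^n(U)\cap U$, so $f^n(U)\cap U\neq\emptyset$ with $n\geq 1$. Since $U$ was an arbitrary neighborhood of $z$, this shows $z\in\Omega(f)$ and proves the inclusion.

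I expect the only genuine point requiring care to be the insistence that the chain has length $n\geq 1$ (so that the return time $k=n$ is positive, as the definition of $\Omega(f)$ demands $k\in\mathbb{N}^+$) together with the verification that the periodic extension is a bona fide $\delta$-pseudo-orbit at the junction indices; everything else is a direct application of the definition of shadowable point. This mirrors the classical argument that the POTP forces $CR(f)=\Omega(f)$ as in Theorem 3.1.2 of \cite{ah}, now localized to the single point $z$ by replacing the global POTP with the shadowableness of $z$.
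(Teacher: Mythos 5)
Your proof is correct and follows essentially the same route as the paper's: extend a $\delta$-chain from $z$ to itself periodically into a bi-infinite $\delta$-pseudo-orbit through $z$, shadow it using the shadowableness of $z$, and read off the shadowing at times $0$ and $n$ to conclude $f^n(B[z,\epsilon])\cap B[z,\epsilon]\neq\emptyset$. The only differences are cosmetic (phrasing via an arbitrary neighborhood $U$ rather than letting $\epsilon$ be arbitrary, and the explicit junction check), so nothing further is needed.
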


\begin{proof}
Fix $x\in Sh(f)\cap CR(f)$ and $\epsilon>0$.
For this $\epsilon$ we let $\delta$ be given by the shadowableness of $x$.
Since $x\in CR(f)$, there is a $\delta$-chain $\{x_i:0\leq i\leq k\}$ from $x$ to itself.
Define the sequence $\xi=(\xi_n)_{n\in\mathbb{Z}}$ by $\xi_{pk+i}=x_i$ for $p\in\mathbb{Z}$ and $0\leq i\leq k-1$.
It follows that $\xi$ is a $\delta$-pseudo-orbit through $x$ so
there is $y\in X$ such that $d(f^n(y),\xi_n)\leq\epsilon$ for every $n\in \mathbb{Z}$.
In particular, $d(y,x)\leq\epsilon$ and $d(f^k(y),x)\leq \epsilon$ and so
$f^k(B[x,\epsilon])\cap B[x,\epsilon])\neq\emptyset$. As $\epsilon$ is arbitrary, we get $x\in \Omega(f)$.
\end{proof}

As in theorems 2.3.3 and 2.3.4 of \cite{ah} we can prove the following.

\begin{lem}
\label{fk}
If $f: X\to X$ is a homeomorphism of a compact metric space $X$,
then $Sh(f)= Sh(f^k)$ for every $k\in\mathbb{Z}\setminus \{0\}$.
\end{lem}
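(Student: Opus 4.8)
The plan is to establish $Sh(f)=Sh(f^k)$ first for positive integers $k$, by proving the two inclusions separately, and then to reduce negative exponents to the positive case. Throughout I would use, exactly as in the proofs of Lemmas \ref{l1} and \ref{invariant}, that compactness makes each of the finitely many powers $f^0,f^1,\dots,f^{k-1}$ uniformly continuous; this is what lets me transfer pseudo-orbit and shadowing estimates across a block of length $k$.

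For the inclusion $Sh(f)\subset Sh(f^k)$, fix $x\in Sh(f)$ and $\epsilon>0$, and let $\delta>0$ be given by the shadowableness of $x$ for $f$ at this $\epsilon$. Given a $\delta$-pseudo-orbit $\eta=(\eta_m)_{m\in\mathbb{Z}}$ of $f^k$ with $\eta_0=x$, I would \emph{fill in} each block by setting $\xi_{mk+r}=f^r(\eta_m)$ for $0\le r<k$. Inside a block the consecutive errors vanish, while at each junction $d(f(\xi_{mk+k-1}),\xi_{(m+1)k})=d(f^k(\eta_m),\eta_{m+1})\le\delta$, so $\xi$ is a $\delta$-pseudo-orbit of $f$ through $x$. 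Shadowing it by some $y$ with $d(f^n(y),\xi_n)\le\epsilon$ and reading off the indices $n=mk$ yields $d((f^k)^m(y),\eta_m)\le\epsilon$, i.e.\ $x\in Sh(f^k)$. This direction costs nothing and produces exactly $\epsilon$.

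The reverse inclusion $Sh(f^k)\subset Sh(f)$ is the step I expect to require the real care. Fix $x\in Sh(f^k)$ and $\epsilon>0$. Using uniform continuity of $f^0,\dots,f^{k-1}$ I would pick $\epsilon'\in(0,\epsilon/2]$ with $d(u,v)\le\epsilon'\Rightarrow d(f^r(u),f^r(v))\le\epsilon/2$ for $0\le r<k$, take $\delta'>0$ from the shadowableness of $x$ for $f^k$ applied to $\epsilon'$, and finally choose $\delta>0$ so small that every $\delta$-pseudo-orbit $\xi$ of $f$ satisfies both $d(f^k(\xi_{mk}),\xi_{(m+1)k})\le\delta'$ and $d(f^r(\xi_{mk}),\xi_{mk+r})\le\epsilon/2$ for $0\le r<k$. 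Both estimates come from telescoping along a block and applying uniform continuity of the intermediate powers to the step-$\delta$ errors, and this is the only delicate bookkeeping in the argument. Then for a $\delta$-pseudo-orbit $\xi$ through $x$ the \emph{subsampled} sequence $\eta_m=\xi_{mk}$ is a $\delta'$-pseudo-orbit of $f^k$ through $x$, so there is $y$ with $d(f^{mk}(y),\xi_{mk})\le\epsilon'$ for all $m$. Writing an arbitrary index as $n=mk+r$ with $0\le r<k$ and splitting
\[
d(f^n(y),\xi_n)\le d(f^r(f^{mk}(y)),f^r(\xi_{mk}))+d(f^r(\xi_{mk}),\xi_{mk+r})\le \tfrac{\epsilon}{2}+\tfrac{\epsilon}{2}=\epsilon
\]
shows $\xi$ is $\epsilon$-shadowed, hence $x\in Sh(f)$.

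It remains to handle negative exponents, for which I would first record the time-reversal symmetry $Sh(f)=Sh(f^{-1})$: reversing a pseudo-orbit $(\eta_m)$ of $f^{-1}$ into $\xi_n=\eta_{-n}$ turns it, up to the modulus of continuity of $f$, into a pseudo-orbit of $f$, and an $f$-shadow of $\xi$ is an $f^{-1}$-shadow of $\eta$ after reindexing (the reverse implication is identical). Granting this, for $k\le -1$ I write $f^k=(f^{-1})^{|k|}$ and combine the positive-exponent equality applied to the homeomorphism $f^{-1}$, namely $Sh(f^{-1})=Sh((f^{-1})^{|k|})=Sh(f^k)$, with $Sh(f^{-1})=Sh(f)$ to conclude $Sh(f^k)=Sh(f)$.
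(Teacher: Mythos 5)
Your proof is correct, and it is essentially the argument the paper itself intends: the paper gives no proof at all for this lemma, simply saying it can be proved ``as in theorems 2.3.3 and 2.3.4 of \cite{ah}'', and those standard proofs are exactly your block-filling construction for $Sh(f)\subset Sh(f^k)$, your subsampling-plus-telescoping argument (using uniform continuity of $f^0,\dots,f^{k-1}$) for the reverse inclusion, and time reversal for negative exponents. You have also handled the one point where the pointwise notion differs from the global POTP statement, namely that each of your constructions preserves the base point $\xi_0=x$, so nothing further is needed.
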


\begin{proof}[Proof of Theorem \ref{thC}]
Let $X$ be a compact connected metric space with more than one point.
Suppose by contradiction that there is a minimal homeomorphism $f: X\to X$
of a compact connected metric space $X$ exhibiting a shadowable point $x$.
Fix $\epsilon>0$.
Clearly $x$ is nonwandering and so we can apply Lemma \ref{l2} to
obtain a point $y$ and a positive integer $k$ such that
the full orbit of $y$ under $f^k$ is contained in the $\epsilon$-ball $B[x,\epsilon]$.
However, $f$ is totally minimal (for the space is connected \cite{b})
so $f^k$ is minimal too.
It follows that the whole space is contained in $B[x,\epsilon]$.
Since $\epsilon$ is arbitrary, this implies that the space reduces to $x$ which is absurd.
This ends the proof.
\end{proof}

\begin{proof}[Proof of Theorem \ref{thB}]
We shall use the following facts about distal homeomorphisms $\phi: X\to X$ on compact metric spaces $X$:
Every $x\in X$ is {\em almost periodic}, i.e., for every neighborhood $U$ of $x$ there is a finite subset $F\subset \mathbb{Z}$
such that $\mathbb{Z}=F:\{n\in \mathbb{Z}:\phi^n(x)\in U\}$.
In particular, every $x$ is {\em recurrent} in the sense that $x\in \omega(x)$, where
$
\omega(p)=\{q\in X:q=\lim_{l\to\infty}\phi^{n_l}(p)\mbox{ for some sequence }n_l\to\infty\},
$ for every $p\in X$.
In particular, $\Omega(\phi)=X$.

To prove $Sh(f)\subset X^{deg}$ for distal homeomorphisms $f: X\to X$ we use
the argument in \cite{a} (or Theorem 11.5.5 of \cite{ah}) but with some modifications.
Take $z\in Sh(f)$ and suppose by contradiction that
$z\not\in X^{deg}$.
Then, the connected component $F$ of $X$ containing $z$ (which is compact)
has positive diameter $diam(F)>0$.
Take $0<\epsilon<\frac{1}{11}diam(F)$.
Since $f$ is distal, $z\in \Omega(f)$.
Then,
by Lemma \ref{l2}, there are $k\in \mathbb{N}^+$ and $y\in X$ such that $f^{nk}(y)\in B[z,\epsilon]$ for every $n\in\mathbb{Z}$.
Define $g=f^k$. Then,
\begin{equation}
\label{pelo}
g^n(y)\in B[z,\epsilon],\quad\quad\forall n\in\mathbb{Z}.
\end{equation}
On the other hand, $z\in Sh(g)$ by
Lemma \ref{fk}. Then, for the above $\epsilon$, we can choose $\delta>0$ from Lemma \ref{l1} with $K=\{z\}$.
We can assume that $\delta<\epsilon$.
Since $F$ is compact and connected, we can choose a sequence $y=p_1,p_2,\cdots, p_N\in F$ such that $d(p_i,p_{i+1})\leq\frac{\delta}{2}$
for $1\leq i\leq N-1$ and
\begin{equation}
\label{telo}
 F\subset \bigcup_{i=1}^NB[p_i,\delta].
\end{equation}

Since $z\in F$, we have $z\in B[p_{i_z}, \delta]$ for some $1\leq i_z\leq N$.
But $g$ is distal (for $f$ is) so every $p_i$ is recurrent with respect to $g$.
From this we can find positive integers $c(i)$ ($1\leq i\leq N$)
such that $d(p_i,g^{c(i)}(p_i))\leq \frac{\delta}{2}$ for all $i$.

As in \cite{a} we define the sequence $\eta$ by

\begin{eqnarray*}
\eta_i & = & g^i(p_1) \quad\quad \mbox{ if }i\leq0, \\
\eta_i & = & g^i(p_1) \quad\quad \mbox{ if }0\leq i\leq c(1)-1, \\
\eta_{c(1)+i} & = & g^i(p_2) \quad\quad \mbox{ if }0\leq i\leq c(2)-1, \\
& \vdots & \\
\eta_{c(1)+\cdots+c(N-2)+i} & = & g^i(p_{N-1}) \quad\quad \mbox{ if }0\leq i\leq c(N-1)-1, \\
\eta_{c(1)+\cdots+c(N-1)+i}& = & g^i(p_N) \quad\quad \mbox{ if }0\leq i\leq c(N)-1, \\
\eta_{c(1)+\cdots+c(N)+i} & = & g^i(p_{N-1}) \quad\quad \mbox{ if }0\leq i\leq c(N-1)-1, \\
& \vdots & \\
\eta_{c(1)+2\{c(2)+\cdots+c(N-1)\}+c(N)+i} & = & g^i(p_1) \quad\quad \mbox{ if }i\geq0.
\end{eqnarray*}

Clearly $\eta$ is a $\delta$-pseudo-orbit of $g$
and $\eta_{n_z}=p_{i_z}\in B[z,\delta]$, where
$$
n_z=c(1)+\cdots +c(i_z-1).
$$
Let $\xi$ be the sequence defined by $\xi_n=\eta_{n+n_z}$.
Clearly $\xi$ is a $\delta$-pseudo-orbit too but now through $B[z,\delta]$.
Then, by Lemma \ref{l1}, there is $x\in X$ such that $d(g^n(x),\xi_n)\leq \epsilon$ for every $n\in\mathbb{Z}$.
Then, by taking $\hat{z}=g^{-n_z}(x)$ we obtain
$d(g^n(\hat{z}),\eta_n)\leq\epsilon$ for every $n\in\mathbb{Z}$.
Since each $p_i\in \eta$ by definition, we conclude that there are
integers $n_1,\cdots, n_N$ satisfying
$d(g^{n_i}(\hat{z}),p_i)\leq \epsilon$ for every $1\leq i\leq N$.

Next we observe that by taking
$c=c(1)+2\{c(2)+\cdots+c(N-1)\}+c(N)$
we obtain
$$
d(g^i(\hat{z}),g^i(y))\leq\epsilon \quad(\mbox{for } i\leq c(1)-1)
\mbox{ and }d(g^{i+c}(\hat{z}),g^i(y))\leq \epsilon\quad(\mbox{for } i\geq0).
$$
This combined with (\ref{pelo}) yields
$$
g^i(\hat{z})\in B[z,2\epsilon] \mbox{ whenever } i\not\in ]c(1)-1,c[.
$$
However, $g$ is distal so $\hat{z}$ is recurrent with respect to $g$ thus
for every $j\in ]c(1)-1,c[$ there is $i_j\geq c$  such that $d(g^{i_j}(\hat{z}),g^j(\hat{z}))\leq \epsilon$.

Now take $w\in F$.
It follows from (\ref{telo}) that $d(w,p_i)\leq \delta$ for some $1\leq i\leq N$.
Then, $d(g^{j}(\hat{z}),w)\leq d(g^{j}(\hat{z}),p_i)+d(p_i,w)\leq \epsilon+\delta<2\epsilon$,
where $j=n_i$.
Now we have two cases:

If $j\not\in ]c(1)-1,c[$ then
$$
d(w,z)\leq d(w,g^j(\hat{z}))+d(g^j(\hat{z}),z)\leq 2\epsilon+2\epsilon=4\epsilon.
$$
If $j\in ]c(1)-1,c[$ then
$$
d(w,z)\leq d(w,g^j(\hat{z}))+d(g^j(\hat{z}),g^{i_j}(\hat{z}))+d(g^{i_j}(\hat{z}),z)
\leq 2\epsilon+\epsilon+2\epsilon=5\epsilon.
$$

From these cases we conclude that $F\subset B[z,5\epsilon]$ and so
$diam(F)\leq 10\epsilon$. But this contradicts the choice of $\epsilon$ so $z\in X^{deg}$.
As $z\in Sh(f)$ is arbitrary, we obtain $Sh(f)\subset X^{deg}$ and the proof follows.
\end{proof}

\begin{proof}[Proof of Theorem \ref{separated}]
Every equicontinuous homeomorphism $f: X\to X$ of a compact metric space $X$ is distal so
$Sh(f)\subset X^{deg}$ by Theorem \ref{thB}.
Conversely, take $p\in X^{deg}$ and $\epsilon>0$.

Since $f$ is equicontinuous, we can choose 
$\epsilon'>0$ 
such that
$r,s\in X$ and $d(r,s)\leq \epsilon'$ imply
$d(f^n(r),f^n(s))\leq \epsilon$ for all $n\in\mathbb{Z}$.

On the other hand, the proof of Proposition 3.1.7 in \cite{at} implies that
there is a {\em clopen} (i.e. open and closed subset)
$U$ of $X$ with diamenter $diam(U)\leq\epsilon'$ such that $p\in U$.
In particular, $dist(U,X\setminus U)>0$ and so we can take
$$
0<\delta'<\frac{dist(U,X\setminus U)}{2}.
$$
Again, since $f$ is equicontinuous, we can choose $\delta>0$ 
such that
$a,b\in X$ and $d(a,b)\leq \delta$ imply
$d(f^n(a),f^n(b))\leq \delta'$ for all $n\in\mathbb{Z}$.

Now take a $\delta$-pseudo-orbit $\xi$ with $\xi_0=p$ of $f$.

By definition $d(f(\xi_0),\xi_1)\leq \delta$ so
$d(f(p),\xi_1)\leq\delta$ thus $d(f^{n+1}(p),f^n(\xi_1))\leq\delta'$ for every $n\in\mathbb{Z}$. In particular,
$d(p,f^{-1}(\xi_1))\leq\delta'$ by taking $n=-1$.
Since $\{U, X\setminus U\}$ is a covering of $X$,
we have either $f^{-1}(\xi_1)\in U$ or $f^{-1}(\xi_1)\in X\setminus U$.
If $f^{-1}(\xi_1)\in X\setminus U$, then
$$
2\delta'< dist(X\setminus U,U)\leq d(p,f^{-1}(\xi_1))\leq \delta'
$$
which is absurd. Then, $f^{-1}(\xi_1)\in U$.

Again $d(f(\xi_1),\xi_2)\leq \delta$ so
$d(f^{n+1}(\xi_1),f^{n}(\xi_2))\leq\delta'$ for all $n\in\mathbb{Z}$.
In particular,
$d(f^{-1}(\xi_1),f^{-2}(\xi_2))\leq\delta'$ by taking $n=-2$.
Since $\{U, X\setminus U\}$ is a covering of $X$,
we have  either $f^{-2}(\xi_2)\in U$ or $f^{-2}(\xi_2)\in X\setminus U$.
If $f^{-2}(\xi_2)\in X\setminus U$, then
$$
2\delta'< dist(X\setminus U,U)\leq d(f^{-1}(\xi_1),f^{-2}(\xi_2))\leq\delta'
$$
which is absurd. Then, $f^{-2}(\xi_2)\in U$.

Repeating the argument we obtain $f^{-n}(\xi_n)\in U$ for every $n\in\mathbb{Z}$.

It follows  that $d(p,f^{-n}(\xi_n))\leq diam(U)\leq \epsilon'$ for every $n\in\mathbb{Z}$.
Hence, the choice of $\epsilon'$ implies
$d(f^k(p),f^{k-n}(\xi_n))\leq\epsilon$ for all $k,n\in \mathbb{Z}$.
By taking $k=n$ we obtain
$d(f^n(p),\xi_n)\leq \epsilon$ for all $n\in\mathbb{Z}$.
Then, $\xi$ can be $\epsilon$-shadowed (by the orbit of $p$).
Since $\epsilon$ is arbitrary, we obtain $p\in Sh(f)$
so $X^{deg}\subset Sh(f)$ as desired.
\end{proof}

From this lemma we obtain the following example.

\begin{ex}
\label{uno}
There are a compact metric space $X$ and a homeomorphism $f: X\to X$
such that $Sh(f)$ is a nonempty noncompact subset of $X$.
\end{ex}

\begin{proof}
Define $X=C\cup [1,2]$ with the topology induced from $\mathbb{R}$, where
$C$ be the ternary Cantor set of $[0,1]$.
Clearly $X^{deg}=C\setminus \{1\}$.
Now take $f: X\to X$ as the identity of $X$.
Since the identity is an equicontinuous homeomorphism, we obtain
$Sh(f)=X^{deg}$ by Theorem \ref{separated}. Then $Sh(f)=C\setminus \{1\}$.
Since $C\setminus \{1\}$ is nonempty and noncompact, we are done.
\end{proof}

\begin{remark}
\label{rm1}
It follows from Example \ref{uno} (with $K=C\setminus \{1\}$) that Lemma \ref{naosei} is false if $K$ were noncompact.
\end{remark}

\begin{proof}[Proof of Theorem \ref{thA}]
Let $f:X\to X$ be a homeomorphism of a compact metric space $X$.
We have that
$Sh(f)$ is invariant by Lemma \ref{invariant}.
There are homeomorphisms $f$ for which $Sh(f)$ is nonempty and noncompact by Example \ref{uno}.
By taking $K=X$ in Lemma \ref{naosei} we have that $f$ has the POTP if and only if
$Sh(f)=X$.
Finally, since $\Omega(f)\subset CR(f)$ we have that if $CR(f)\subset Sh(f)$ then $CR(f)=\Omega(f)$ by Lemma \ref{l0}.
This completes the proof.
\end{proof}

\end{document}